\pgfplotsset{width=7cm,compat=1.3}
\newtheorem{theorem}{Theorem}
\newtheorem{proposition}{Proposition} 
\newtheorem{lemma}{Lemma}
\newtheorem{assumption}{Assumption}
\newtheorem{remark}{Remark}
\newtheorem{corollary}{Corollay}
\newtheorem{definition}{Definition}
\begin{document}
\title{Global Optimum is not Limit Computable}
\author{K. Lakshmanan \thanks{Department of Computer Science and Engineering, Indian Institute of Technology (BHU), Varanasi 221005, Email: lakshmanank.cse@iitbhu.ac.in }}
\date{}

\maketitle


\begin{abstract}
We study the limit computability of finding a global optimum of a non-convex continuous function. We give a short proof to show that the problem of checking whether a point is a global minimum is not limit computable. Thereby showing the same for the problem of finding a global minimum. In the next part, we give an algorithm that converges to the global minima when a lower bound on the size of the basin of attraction of the global minima is known. We prove the convergence of this algorithm and provide some numerical experiments.
\end{abstract}

\section{Introduction and Preliminaries}
We consider the problem of finding the global minima of a non-convex continuous function $f:  C \rightarrow \mathbb{R}$, where $ C \subset \mathbb{R}^n$ is a closed, compact subset. Global minima is the point $x^* \in C$ which satisfies the following property: $f(x^*) \leq f(x)$ for all $x \in C$. The function $f$ attains this minimum at least once by extreme value theorem. Our goal is to find one such point. This problem is well-studied with many books written on the subject, see for example \cite{globook}.

In this paper, we show that this problem is not limit computable. That is there is no algorithm that can convergence to the global minima of any continuous function without any knowing any other information about the function. In fact, we show a simpler problem of checking whether a local minimum is global is itself not limit computable. Next with the knowledge about the basin of attraction of the global minima we give a fast algorithm that converges to the global minima. Before proceeding further we define some preliminary concepts about reducibility and limit computability. These definitions are as in chapters 1 and 3 of \cite{sorbook}.


\begin{definition}
A Turing machine has a two-way infinite tape divided into cells, a reading head which scans one cell of the tape at a time, and a finite set of internal states $Q=\{q_0,q_1,\ldots,q_n\}, \, n \geq 1$. Each cell is either blank or has symbol 1 written on it. In a single step the machine may simultaneously (1) change the from one state to another; (2) change the scanned symbol $s$ to another symbol $s'\in S = \{1,B\}$; (3) move the reading head one cell to the right (R) or left (L). This operation of machine is controlled by a partial map $\delta : Q \times S \rightarrow Q \times S \times \{R,L\}$.
\end{definition}

The map $\delta$ viewed as a finite set of quintuples is called a Turing program.

\begin{definition}
 An oracle Turing machine (o-machine) is a Turing machine with an extra `` read only " tape called the oracle tape, upon which the characteristic function of some set $A$ is written. Two reading heads move along these two tapes simultaneously.
\end{definition}

In a given state $q_1$ if the tapes contain symbols $s_1$ and $t_1$ the machine changes the work tape symbol to $t_2$ change the state to $q_2$ and move the head either right or left independently. An oracle Turing machine is a finite sequence of program lines. Fix an effective coding (G\"{o}del numbering) of all oracle Turing programs for o-machines. Let $\tilde{P}_e$ denote the $e$th such oracle program under this effective coding. If the oracle machine halts, let u be the maximum cell on the oracle tape scanned during computation, i.e., maximum integer whose membership in $A$ has been tested. We say that the elements $z \leq u$ are used in computation.  If no element is scanned we let $u = 0$.

\begin{definition}
 If the oracle program $\tilde{P}_e$ with $A$ on the oracle tape and input $x$ halts with output $y$ and if $u$ is the maximum element used on the oracle tape during computation, then we write
 \[\Phi^A_e(x) = y \mbox{ and } \varphi^A_e(x)=u. \]
 We refer $\Phi^A_e(x)$ as a Turing functional and we call corresponding $\varphi^A_e(x)$ the corresponding use function. The functional is determined by the program $\tilde{P}_e$ and may be partial or total.
\end{definition}

\begin{definition}
 A partial function $\theta$ is Turing computable in A (A-Turing computable), written $\theta \leq_T A$, if there is an $e$ such that $\Phi^A_e(x) \downarrow = y$ if and only if $\theta(x) = y$. A set $B$ is Turing reducible to $A$ ($B \leq_T A$) if the characteristic function $\chi_B \leq_T A$.
\end{definition}

We denote the set $\{0,1,2,3,\ldots \}$ by $\omega$.

\begin{definition}
 A set $A$ is limit computable if there is a computable sequence $\{A_s\}_{s \in \omega}$ such that for all $x$,
 \[ A(x) = \lim_s A_s(x). \]
 Here $A(x)$ is the characteristic function of $A$.
\end{definition}

\begin{definition}
A set $A$ is $\Sigma_2$ if there is a computable relation $R$ with 
\[ x \in A \Leftrightarrow (\exists y) (\forall z) R(x,y,z) .\]
A set is $\Pi_2$ if $\bar{A}$ is in $\Sigma_2$. And a set A is $\triangle_2$ if $A \in \Sigma_2$ and $A \in \Pi_2$. 
\end{definition}

\begin{lemma}[Limit Lemma (Shoenfield 1959)]
 A set $A$ is limit computable if and only if $A \in \triangle_2$.
\end{lemma}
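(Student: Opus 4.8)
The plan is to prove the two implications of the biconditional separately, working directly with the quantifier definitions of $\Sigma_2$, $\Pi_2$, and $\Delta_2$ given above rather than invoking any machinery about $\emptyset'$. The ``limit computable $\Rightarrow \Delta_2$'' direction is routine: if $A(x) = \lim_s A_s(x)$ for a computable double sequence $(x,s)\mapsto A_s(x)$, then the limit equals $1$ exactly when the approximation is eventually $1$ and equals $0$ exactly when it is eventually $0$, so
\[ x \in A \iff (\exists s)(\forall t)\,[\,t \ge s \to A_t(x) = 1\,], \qquad x \notin A \iff (\exists s)(\forall t)\,[\,t \ge s \to A_t(x) = 0\,]. \]
In each line the matrix is a computable predicate of $(x,s,t)$, so the first shows $A \in \Sigma_2$ and the second shows $\bar A \in \Sigma_2$, i.e. $A \in \Pi_2$; hence $A \in \Delta_2$.

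For the converse, fix computable relations $R,S$ with $x \in A \iff (\exists y)(\forall z)R(x,y,z)$ and $x \notin A \iff (\exists y)(\forall z)S(x,y,z)$. I would define the approximation at stage $s$ by a bounded search: for each $x \le s$ (and $A_s(x)=0$ for $x>s$), look for the least $y \le s$ such that either $(\forall z \le s)R(x,y,z)$ holds or $(\forall z \le s)S(x,y,z)$ holds; if the first alternative holds for that $y$, set $A_s(x)=1$, and otherwise (second alternative, or no such $y$) set $A_s(x)=0$. This is computable uniformly in $x$ and $s$, so $\{A_s\}$ is a legitimate computable sequence of sets.

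The verification that $\lim_s A_s(x) = A(x)$ goes as follows. Suppose $x \in A$ and let $y_0$ be the least $y$ with $(\forall z)R(x,y,z)$. Every $y < y_0$ fails $(\forall z)R(x,y,z)$ and so has a refuting $z$; and since $x \notin \bar A$, every $y \le y_0$ fails $(\forall z)S(x,y,z)$ and so also has a refuting $z$. Once the stage is large enough that all of these finitely many refuting witnesses have appeared and that $y_0 \le s$, the least $y \le s$ passing either bounded test is exactly $y_0$, via its $R$-clause, so $A_s(x)=1$ from that stage on. The case $x \notin A$ is symmetric, exchanging $R$ and $S$. Hence the limit exists and equals $A(x)$.

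The delicate point — and the place I expect the real work to sit — is precisely this convergence argument. A ``false'' witness $y$, one for which $(\forall z\le s)R(x,y,z)$ holds at some stage but $(\forall z)R(x,y,z)$ ultimately fails, can drag $A_s(x)$ to the wrong value temporarily. The construction handles this by searching for the least witness for \emph{either} $R$ or $S$ and by exploiting that $x$ lies in exactly one of $A,\bar A$: every smaller false witness (on either side) is eventually refuted, and the true least witness on the correct side then takes over permanently. (One could alternatively route through Post's theorem, identifying $\Delta_2$ with the sets Turing reducible to $\emptyset'$ and approximating $\emptyset'$ by its stagewise enumeration with a use/permitting argument, but the direct construction above stays entirely within the quantifier definitions already introduced.)
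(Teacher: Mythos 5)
Your proof is correct. Note that the paper does not actually supply an argument here: it simply cites Lemma 3.6.2 of Soare, where the Limit Lemma is standardly proved by identifying the limit computable sets with the sets $A \leq_T \emptyset'$ (via a use/modulus argument) and then invoking Post's theorem to identify those with $\triangle_2$. Your route is genuinely different and more self-contained: both directions stay entirely inside the quantifier definitions of $\Sigma_2$, $\Pi_2$, and $\triangle_2$ that the paper actually gives, never mentioning the halting set. The easy direction is exactly right ($x \in A$ iff the approximation is eventually $1$, which is $\exists\forall$ over a computable matrix, and symmetrically for $\bar A$). For the converse, your stagewise bounded search for the least $y \leq s$ passing either the $R$-test or the $S$-test is sound, and you correctly identify and discharge the one delicate point: a false witness $y < y_0$ can only mislead finitely often, because its refuting $z$ (on whichever side) eventually falls below the search bound, and the fact that $x$ lies in exactly one of $A, \bar A$ guarantees that at the true least witness $y_0$ exactly the correct clause survives, so $A_s(x)$ stabilizes at the right value. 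What your approach buys is a proof readable from the paper's own definitions alone; what the textbook route buys is the additional information that the modulus of convergence computes $A$ relative to $\emptyset'$, which is what Proposition~\ref{keyprop} in the paper is gesturing at.
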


\begin{proof}
 We refer to proof of lemma 3.6.2 of \cite{sorbook}.
\end{proof}
By Limit lemma we also call $\{A_s\}_{s \in \omega}$ the $\triangle_2$-approximation of $A$.

\begin{definition}
 Let $A \upharpoonleft \upharpoonleft x$ denote the set $\{A(y): y \leq x \}$. Given $\{A_s\}_{s \in \omega}$, any function $m(x)$ is a modulus (of convergence) if 
 \[ \forall x (\forall s \geq m(x)) \, [A \upharpoonleft \upharpoonleft x = A_s \upharpoonleft \upharpoonleft x ]. \]
\end{definition}

\begin{proposition}\label{keyprop}
 If $A$ is limit computable via $\{A_s\}_{s \in \omega}$ with any modulus $m(x)$, then $A \leq_T m$.
\end{proposition}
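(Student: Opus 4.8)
The plan is to exhibit an oracle Turing machine with oracle $m$ that computes the characteristic function $\chi_A$. First I would record the one fact we need about the hypothesis: since $\{A_s\}_{s\in\omega}$ is a \emph{computable} sequence, the two-place function $(s,x)\mapsto A_s(x)$ is (total) computable, so fix once and for all an ordinary Turing program $P$ that on input $(s,x)$ halts with output $A_s(x)$.

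Next I would describe the o-machine. On input $x$: (1) query the oracle for the value $m(x)$ — a single oracle interrogation, using only finitely much of $m$; (2) run $P$ on the pair $(m(x),x)$; (3) output the result $A_{m(x)}(x)$. This computation always halts, since the oracle call returns a value and $P$ is total.

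For correctness I would invoke the defining property of a modulus directly. By definition, for every $x$ and every $s\geq m(x)$ we have $A\upharpoonleft\upharpoonleft x = A_s\upharpoonleft\upharpoonleft x$, i.e. $A(y)=A_s(y)$ for all $y\leq x$. Specializing to $s=m(x)$, which trivially satisfies $s\geq m(x)$, and to $y=x$ yields $A(x)=A_{m(x)}(x)$. Hence the machine described above outputs $\chi_A(x)$ for every $x$, which is exactly $\chi_A\leq_T m$, and therefore $A\leq_T m$.

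I do not expect any genuine obstacle here; the single point worth stating carefully is that the argument evaluates the approximation at stage $s=m(x)$ itself rather than at some strictly larger stage, which is legitimate precisely because the modulus condition is quantified over all $s\geq m(x)$ inclusive. (Equivalently, one may run $P$ at stage $m(x)$ on each of $0,1,\dots,x$ to read off the whole string $A\upharpoonleft\upharpoonleft x$ at once, from which $A(x)$ is obtained.)
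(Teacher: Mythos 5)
Your proposal is correct and is essentially the paper's own proof, which consists of the single line ``Take $A(x) = A_{m(x)}(x)$''; you have simply spelled out the details (the computability of $(s,x)\mapsto A_s(x)$, the single oracle query for $m(x)$, and the specialization of the modulus condition to $s=m(x)$, $y=x$) that the paper leaves implicit. No substantive difference.
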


\begin{proof}
 Take $A(x) = A_{m(x)}(x)$.
\end{proof}

\section{Main Theorem}
Let the set of global minima of the function $f$ be denoted by $G$. We have the following lemma for the set $G$.

\begin{lemma}\label{mainlem}
 The set $G$ is not limit computable.
\end{lemma}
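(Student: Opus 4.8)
The plan is to avoid analysing $G$ directly and to argue through the Limit Lemma: since a set is limit computable iff it lies in $\triangle_2$, it suffices to exhibit the global-minima set $G$ as a set that is \emph{not} $\triangle_2$. I would obtain this by a computable many-one reduction of a canonical non-$\triangle_2$ set into $G$. The natural target is the totality set $\mathrm{Tot}=\{e:\psi_e\text{ is total}\}$, where $\psi_e$ is the $e$th partial computable function; $\mathrm{Tot}$ is $\Pi_2$-complete, hence it lies outside $\Sigma_2$ and therefore outside $\triangle_2$. Because $\triangle_2$ is closed downward under $\le_m$ (indeed under $\le_T$), producing a computable reduction $\mathrm{Tot}\le_m G$ forces $G\notin\triangle_2$, and the lemma follows.

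Concretely, I would fix the compact domain $C=\{0\}\cup\{1/(e+1):e\in\omega\}\subset[0,1]$, which is closed and bounded, hence compact. Define $f:C\to\mathbb{R}$ by $f(0)=0$ and, for each $e$, $f(1/(e+1))=\sum_{n\ge0}2^{-(e+n+2)}\,d_e(n)$, where $d_e(n)=1$ if $\psi_e(n)\!\uparrow$ and $d_e(n)=0$ if $\psi_e(n)\!\downarrow$. Then $0\le f(1/(e+1))\le 2^{-(e+1)}\to 0$, so $f$ is continuous: every $1/(e+1)$ is isolated in $C$, and the values tend to $f(0)=0$ at the sole accumulation point. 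All values are nonnegative and $f(0)=0$, so the global minimum value is $0$, and $1/(e+1)$ is a global minimizer precisely when $f(1/(e+1))=0$, i.e. precisely when $d_e(n)=0$ for every $n$, i.e. precisely when $\psi_e$ is total. Hence $G=\{0\}\cup\{1/(e+1):e\in\mathrm{Tot}\}$.

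To finish, I would identify the relevant points with their indices through the computable coding $e\mapsto 1/(e+1)$, so that $G$ is read as the subset $\{e:1/(e+1)\in G\}$ of $\omega$, which is the object ``limit computable'' refers to. This same map then witnesses $\mathrm{Tot}\le_m G$, since $e\in\mathrm{Tot}\iff 1/(e+1)\in G$. As $\mathrm{Tot}\notin\triangle_2$, we get $G\notin\triangle_2$, and by the Limit Lemma $G$ is not limit computable. If one insists on a function on the full interval, I would seat disjoint tent-shaped bumps of the prescribed heights on the anchor value $0$, giving a genuinely non-convex continuous $f:[0,1]\to\mathbb{R}$ with the same minima set.

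The step needing the most care is the continuity/representation of $f$, which is also exactly where the problem escapes $\triangle_2$. A \emph{computable} function on a computably compact set attains a computable minimum value (a computable modulus of continuity reduces the infimum to a finite net), so for such $f$ the predicate ``$x$ is a global minimizer'' is merely $\Pi_1$ and $G$ would be limit computable; the lemma can hold only because the relevant $f$ is continuous but not effectively continuous. In the construction above each value $f(1/(e+1))$ is the limit of the nonincreasing computable approximations $\sum_{n\le s}2^{-(e+n+2)}\,d_e^{\,s}(n)$, where $d_e^{\,s}(n)=1$ until stage $s$ witnesses $\psi_e(n)\!\downarrow$; thus $f$ is approximable from above yet has no computable modulus, and it is precisely this missing modulus that inserts the universal quantifier over $n$ and lifts the complexity from $\Pi_1$ to $\Pi_2$. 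One may also read this through Proposition \ref{keyprop}: any modulus $m$ for a $\triangle_2$-approximation of $G$ would give $G\le_T m$ and hence $\mathrm{Tot}\le_T m$, so no such approximation can possess a computable modulus, reflecting the same obstruction.
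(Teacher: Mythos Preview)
Your argument is correct and is genuinely different from the paper's. The paper does not build a specific $f$; instead it rewrites membership in $G$ as the question ``is $h_z(x):=\min\{0,f(z)-f(x)\}$ identically zero?'' and then argues informally that no computable approximating sequence $\{G_s\}$ can settle this, because detecting a possibly arbitrarily thin interval where $h_z<0$ would force a check at a dense set of points, which no Turing machine can complete. Your route is the classical recursion-theoretic one: manufacture a single continuous $f$ on a compact $C$ so that, under a computable coding of $C$ into $\omega$, the global-minima set \emph{is} $\mathrm{Tot}$, and then invoke the Limit Lemma together with the $\Pi_2$-completeness of $\mathrm{Tot}$. What your approach buys is rigor and sharpness: it yields an explicit witness, a clean many-one reduction, and immediately the stronger conclusion $G\in\Pi_2\setminus\Sigma_2$ (which the paper only records later as a corollary). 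It also makes transparent the point you flag about effectivity: the lemma fails for every computable $f$ on a computably compact domain, so any valid witness must be continuous but not computably so, and your $f$ exhibits exactly that failure of a computable modulus. What the paper's approach buys is intuition about \emph{why} the obstruction arises for a generic non-convex $f$ via the $h_z$ picture, though as stated it does not pin down a particular $f$ and leans on the heuristic ``one must check infinitely many points,'' which by itself does not separate $\Delta_2$ from $\Pi_2$.
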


\begin{proof}
 Now consider the modified problem where we define $h_z(x) := \min \{0,f(z)-f(x)\}$. This function is identically zero if and only if $z = x^*$. Hence our problem of finding the global minimum is same as checking whether $h_z(\cdot)$ is identically zero. Since our objective function $f$ is continuous, $h_z(\cdot)$ is also continuous. An example function is shown in figure \ref{illusfig}. The plot on the left shows the original objective function and on right shows the modified function which has to be checked if identically zero. Since $G$ is the set of all global minima it is also the set of all points $z$ where the function $h_z(\cdot)$ is identically zero.

\begin{figure}[!ht]
\centering
\includegraphics{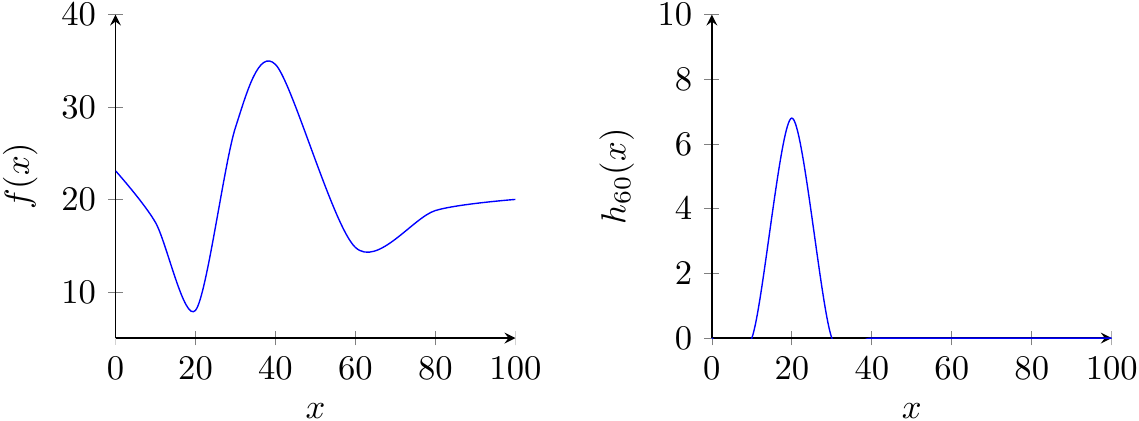}
\caption{The figure on the right shows a sample objective function. The figure of the left is the corresponding function $h_{60}(x) := \min \{0,f(60)-f(x) \}$  as in the proof of Lemma ~\ref{mainlem}. The function $h_{60}(x)$ is not identically zero as $x=60$ is not the global minima of $f(x)$.}
\label{illusfig}
\end{figure}
 
  For $G$ to be limit computable via $\{G_s\}_{s \in \omega}$ with some modulus function $m$ we need,
 \begin{align*}
 \forall x' (\forall s \geq m(x')) & \, [G \upharpoonleft \upharpoonleft x' = G_s \upharpoonleft \upharpoonleft x' ] \\
 \forall x' (\forall s \geq m(x')) & \, \{ G(z): z \leq x' \} = \{ G_s(z): z \leq x' \}.
 \end{align*}
  Note that we need $\{G_s\}_{s \in \omega}$ to be computable. But to compute whether $G_s(z) = 0$ or not for all $z \leq x'$ involves checking whether $h_z(x) \equiv 0 $. That is for each point $z$ we have to check a function is identically zero. But this cannot be checked for any particular $z$ unless it is checked for all $x$. As the function $h_z(x)$ is real valued and continuous if it is non-zero, there exists an interval $I$ whose length can be arbitrarily small such that the function is non-zero in this interval. Since we do not know the length of this interval we have to check for all points which is dense in $x$. But this set of points $x$ is not finite.
  
  As no Turing machine can check (halt) if a function is zero at infinitely many points. We see that 
  there is no computable $\{G_s\}_{s \in \omega}$ with some modulus function $m$ such that
  \[ G(x) = \lim_s G_s(x). \]
  That is we have shown that the set $G$ is not limit computable.
 \end{proof}

 \begin{corollary}
  The problem of checking whether local minima $z$ is global is also not limit computable as this involves checking whether $h_z(\cdot)$ is identically zero.
 \end{corollary}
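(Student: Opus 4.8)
The plan is to deduce this from Lemma~\ref{mainlem} by observing that restricting attention to local minimizers does not make the problem any easier. First I would make the statement precise: let $L$ be the set of points $z$ that are local minimizers of $f$ and satisfy $z\in G$, so that ``checking whether a local minimum $z$ is global'' is exactly deciding membership in $L$ on inputs promised to be local minimizers. The claim to be proved is that $L$ is not limit computable.

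The first step is the elementary remark that every global minimizer of $f$ is in particular a local minimizer, so $G\subseteq\{z: z \text{ is a local minimizer of } f\}$ and therefore $L=G$ as sets. Consequently a computable $\triangle_2$-approximation $\{L_s\}_{s\in\omega}$ with a modulus for $L$ would in particular be a $\triangle_2$-approximation for $G$ on every input that is a local minimizer. The only gap between this and a full limit computation of $G$ is the behaviour on points that are not local minimizers, but such points are never in $G$; so the content of the corollary is entirely about the promise version, and what must be ruled out is that knowing $z$ is a local minimizer could supply enough information to compute $\lim_s L_s(z)$.

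To rule this out I would reuse the mechanism of the proof of Lemma~\ref{mainlem} on a family of instances in which $z$ is, by construction, always a local minimizer: take $f$ with an isolated narrow dip at $z$ normalized to value $0$, together with a far-away region whose minimal value is $0$ or slightly negative, the sign and location of that region being governed by a non-$\triangle_2$ parameter. For every function in this family $z$ is a genuine local minimizer, yet deciding ``$z$ is global'' is precisely deciding whether $h_z(\cdot)\equiv 0$; a ``yes'' answer would require a halting computation to certify $f(x)\ge f(z)$ at a dense, hence infinite, set of points $x$, which is impossible exactly as in Lemma~\ref{mainlem}. Hence $L$ is not limit computable. I expect the only real obstacle to be this last construction --- arranging the competing basin so that its presence encodes a genuinely non-limit-computable object while keeping $f$ continuous and $z$ a bona fide local minimum; once that is done, the argument of Lemma~\ref{mainlem} transfers verbatim.
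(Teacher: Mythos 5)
Your proposal is correct and follows essentially the same route as the paper: the paper's entire justification for this corollary is the one-line observation that deciding whether a local minimum $z$ is global still amounts to checking whether $h_z(\cdot)$ is identically zero, which is exactly the mechanism ruled out in Lemma~\ref{mainlem}. Your additional care about the promise version (that knowing $z$ is a local minimizer might conceivably help) and the sketched family of instances goes beyond what the paper writes, but it is a refinement of the same argument rather than a different one.
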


 \begin{corollary}
  By Limit lemma we can in fact show that the set $G$ of global minima is in $\Pi_2$ but not in $\Sigma_2$ as there is an oracle-Turing machine which will halt and produce the right output if the function $h_z(\cdot)$ is not identically zero. But not when the function is identically zero.
 \end{corollary}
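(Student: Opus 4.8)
The plan is to prove the two assertions of the statement separately: first that $G \in \Pi_2$, by making explicit the semi-decision procedure the statement alludes to, and then that $G \notin \Sigma_2$ as an immediate consequence of this together with Lemma~\ref{mainlem} and the Limit Lemma.

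For $G \in \Pi_2$ I would instead show $\bar G \in \Sigma_2$. Fix a computable enumeration $x_0, x_1, x_2, \dots$ of a countable dense subset of $C$ (for instance its rational points) and assume the usual effective access to the function values, namely that from $i$ and a precision $k$ one can obtain a rational $q_{i,k}$ with $|q_{i,k}-f(x_i)|<2^{-k}$; this is the oracle referred to in the statement. Let $M$ be the o-machine that on input $z$ searches over all pairs $(i,k)$ and halts, declaring ``$z\notin G$'', as soon as the approximations certify $f(x_i)<f(z)$, i.e.\ $h_z(x_i)<0$. Using continuity of $f$ and the extreme value theorem one checks that $M$ halts on $z$ if and only if $z\notin G$: if $z$ is not a global minimum then $f(x^*)<f(z)$ at the minimiser $x^*$, hence $f<f(z)$ on a whole neighbourhood of $x^*$, which contains some $x_i$, so the search succeeds; if $z\in G$ there is no such $x_i$ and $M$ never halts. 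Thus $\bar G$ is the halting set of $M$ relative to the oracle, a predicate of the form $z\notin G \iff (\exists i)(\exists k)\,R(z,i,k)$ with $R$ computable in the oracle, so $\bar G \in \Sigma_2$ and $G \in \Pi_2$.

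The claim $G\notin\Sigma_2$ then follows by contradiction: if $G\in\Sigma_2$, then combined with $G\in\Pi_2$ we would have $G\in\Sigma_2\cap\Pi_2=\triangle_2$, so by the Limit Lemma $G$ would be limit computable, contradicting Lemma~\ref{mainlem}. Hence $G\in\Pi_2\setminus\Sigma_2$, which makes precise the asymmetry in the statement: the event $h_z\not\equiv 0$ is certified by a halting computation, while $h_z\equiv 0$ is not.

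I expect the only genuine difficulty to be in the first step: pinning down what effective access to $f$ is assumed (without some such assumption even the single comparison $f(x_i)<f(z)$ fails to be semi-decidable), and ensuring the dense set $\{x_i\}$ actually meets every neighbourhood of $x^*$ inside $C$ (automatic when $C$ is, say, the closure of an open set, and implicitly what the continuity argument in the proof of Lemma~\ref{mainlem} relies on). Once the search procedure and its halting behaviour are fixed, the quantifier count and the appeal to the Limit Lemma are routine.
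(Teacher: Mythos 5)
Your proposal is correct and follows essentially the same route as the paper: the paper's own justification is exactly the semi-decision procedure for $\bar G$ (an o-machine that halts precisely when $h_z(\cdot)$ is witnessed to be nonzero, giving $G \in \Pi_2$), combined with the Limit Lemma and Lemma~\ref{mainlem} to rule out $G \in \Sigma_2$ via $\Sigma_2 \cap \Pi_2 = \triangle_2$. You merely make explicit the dense enumeration, the approximation oracle for $f$, and the continuity argument that the paper leaves implicit.
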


 Now we can state the main theorem.
 
 \begin{theorem}\label{mainthm}
  Finding global minima of a continuous function is not limit computable.
 \end{theorem}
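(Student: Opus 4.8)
The plan is to read off Theorem~\ref{mainthm} from Lemma~\ref{mainlem} by a short reduction, in the direction ``finding a global minimizer is at least as hard as deciding the set $G$ of global minima.'' First I would pin down what the statement means: to say the search problem is limit computable is to say there is a single algorithm which, given a continuous $f:C\to\mathbb{R}$, outputs a computable sequence $\{x_s\}_{s\in\omega}$ of points of $C$ with $\lim_s x_s = x^\ast$ for some $x^\ast\in G$ (and, on the stronger reading suggested by Proposition~\ref{keyprop}, together with a modulus of convergence). The argument then proceeds by contradiction: from such an algorithm I would manufacture a computable approximation $\{G_s\}_{s\in\omega}$ for $G$ in the sense of the Limit Lemma, contradicting Lemma~\ref{mainlem}.

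The reduction has two steps. Having a limit-computable minimizer $x^\ast$ gives, by continuity of $f$, the optimal value $v^\ast=f(x^\ast)=\lim_s f(x_s)$ as a limit-computable real. Now recall the equivalence already used in the proof of Lemma~\ref{mainlem}: a point $z$ lies in $G$ exactly when $h_z(\cdot)=\min\{0,f(z)-f(\cdot)\}$ is identically zero, which is exactly the statement $f(z)=v^\ast$. So a stage-by-stage comparison of the computable real $f(z)$ with the running approximations of $v^\ast$ would yield the desired computable double sequence $\{G_s\}$, and its limit would be $\chi_G$ precisely because the $x_s$ eventually track a genuine global minimizer. Were this to go through, $G$ would be limit computable, contradicting Lemma~\ref{mainlem}; hence no such search algorithm exists.

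The step I expect to be the crux is the same one isolated in the proof of Lemma~\ref{mainlem}: the comparison $f(z)=v^\ast$ is a test of equality of real numbers, and a strict inequality $f(z)>v^\ast$ may only reveal itself on an interval of unknown, arbitrarily small width, so confirming equality in the limit would again require interrogating a dense (hence infinite) set of points, which no Turing machine can complete. This is why I would present the theorem in contrapositive form --- ``if a global minimizer were limit computable then so would be $G$, which is false'' --- rather than carrying out the comparison directly; the real content is entirely in Lemma~\ref{mainlem}. I would close by remarking that the first corollary already shows the decision version (``is this local minimum global?'') is not limit computable, so the search version inherits this a fortiori, and that the $\Pi_2$-but-not-$\Sigma_2$ refinement of the second corollary carries over verbatim.
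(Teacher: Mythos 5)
Your proposal takes essentially the same route as the paper: the paper's proof of Theorem~\ref{mainthm} is a two-line contradiction argument that simply invokes Lemma~\ref{mainlem}, which is exactly the contrapositive reduction you describe. If anything you supply more of the reduction than the paper does (passing through the optimal value $v^\ast$ and the equality test $f(z)=v^\ast$), and the equality-testing subtlety you honestly flag is left entirely unaddressed in the paper's own one-line proof as well.
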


 \begin{proof}
  Suppose finding the global minima is limit computable then have an oracle machine for computing the set of global minima. But this contradicts the Lemma \ref{mainlem} that set of global minima is not limit computable. 
 \end{proof}
 
 \begin{remark}
  By definition a point $x \in A$ is a local minima if $(\exists n \in \mathbb{N}^+)(\forall y \in B(y,1/n))$  $f(x) < f(y)$. Here $B(y,1/n)$ is the neighbourhood of $y$ with radius $1/n$. Take this to be the computable relation $R$ in the Limit lemma i.e., we have $x \in A \Leftrightarrow (\exists n)( \forall y) \, R(x,n,y)$. Thereby we get that the computing local minima is limit computable.
 \end{remark}

 When additional information is known about the global minima, like it's basin of attraction then the global minima may be limit computable. In fact we give an algorithm converging to the global minima when the basin of attraction is known in the next section.
 
\section{An Algorithm when Basin of Attraction is Known}
In this section, we also assume the function $f$ to be differentiable. Let us denote the gradient by $\triangledown f(x)$. The algorithm takes as input the lower bound $m$ on the basin of attraction of the global minimizer. By basin of attraction we mean the following: if we let the initial point to be in the hypercube of length $m$ in all co-ordinates, i.e., in the basin of attraction around the global minima then the gradient descent algorithm will converge to the minima. The algorithm finds the point $z_k$ where the function takes a minimum amongst all points at a distance of $m$ from each other and does a gradient descent step from the point $z_k$. In this algorithm for simplicity, we do not consider line searches and use constant step-size $t > 0$. The figure \ref{figalg} shows the gradient descent step taken at the point which has the minimum function value amongst all the points in the grid.

We note the similarity of our algorithm with the one considered in paper \cite{helon}, where the basin of attraction of global minimizer is first found by searching then a gradient descent is performed. In our algorithm, these two steps are interleaved. The major issue with their algorithm is that they assume the value of the global minima is known which they assume to be zero. But this need not be known in real-world problems. This assumption is not needed with our approach. Moreover, we have formally shown the convergence of our algorithm.

\begin{figure}[!ht]
\centering
\includegraphics{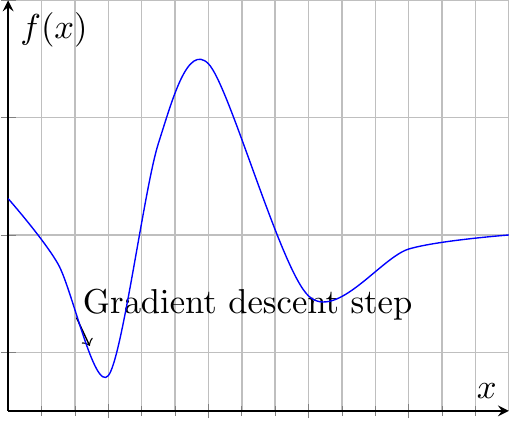}
\caption{The function $f$ to be minimized. Gradient descent step is shown for the interval where the function value is minimum. This interval is a subset of the basin of attraction of global minima.}
\label{figalg}
\end{figure}

\begin{algorithm}[!ht]
\caption{Global Optimization Algorithm \\ Input: Function $f$ and a lower bound $m$ on length of a hypercube contained in basin of attraction of global minimizer of $f$}
\begin{algorithmic}[1]
\STATE Let $C=[a,b]^d$. For simplicity we let the interval $[a,b]$ to be the same in all dimensions.
\STATE Set $y_0 = [a_1^0,\ldots,a_d^0]$ where $a_i^0 = a$ for all $i=1,\ldots,d$. And set $y_j = y_{j-1} + m$ for $j=1,\ldots,(b-a)/m$. Let $x_0 = z_0 = \arg\min_{j=0,\ldots,(b-a)/m} \{f(y_j)\}$.
\WHILE{$k=1,\ldots, \mathcal{L}$}
\STATE Set $y_0 = [a_1^k,\ldots,a_d^k]$ where $a_i^k = a_i^{k-1} - t \triangledown f(z_k)$ for all $i=1,\ldots,d$. And set $y_j = y_{j-1} + m$ for $j=1,\ldots,(b-a)/m$.
\STATE As before let $z_k = \arg\min_{j=0,\ldots,(b-a)/m} \{f(y_j)\}$.
\STATE Update $x_{k} = z_k - t \triangledown f(z_k)$.
\STATE $k = k+1$
\ENDWHILE
\end{algorithmic}
\end{algorithm}

\section{Convergence Analysis}
We show the convergence of the algorithm given in the preceding section. We make the following assumption.

\begin{assumption}\label{ass1}
 The function $f$ is twice differentiable. The gradient of $f$ is Lipschitz continuous with constant $0 < L < 1$, i.e., \[ \parallel \triangledown f(x) -\triangledown f(z) \parallel_2 \leq L \parallel x -z \parallel_2. \]
 That is we have $\triangledown^2 f(x) \preceq LI.$
\end{assumption}

We first state the following lemma used in the proof of the convergence theorem.

\begin{lemma} \label{boundlem}
 Assume that the function $f$ satisfies Assumption \ref{ass1} and the step-size $t \leq 1/L$. We also assume that the global minima $x^*$ is unique. Then there exists a constant $R > 0$ such that for all balls $B(x^*,r)$ with radius $r < R$, there is a $M_{r} > 0$ and that the iterates of the algorithm $\{x_k\}$ remains in this ball $B(x^*,r)$ asymptotically, i.e., $x_k \in B(x^*,r)$ for $k \geq M_{r}$.
\end{lemma}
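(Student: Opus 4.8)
The plan is to exploit the two ingredients the algorithm and assumptions give us: (i) on the hypercube of side $m$ around $x^*$ — which by hypothesis lies inside the basin of attraction — plain gradient descent with step $t \le 1/L$ is a contraction-like iteration toward $x^*$; and (ii) the algorithm's grid-search step always selects the grid point $z_k$ of \emph{least} function value, so once some grid point falls inside the basin, the selected $z_k$ cannot have larger $f$-value than that point. First I would fix $R$ to be small enough that $B(x^*,R)$ is contained in the interior of the side-$m$ hypercube around $x^*$ and small enough that, on $B(x^*,R)$, the standard descent estimate $f(x - t\triangledown f(x)) \le f(x) - \tfrac{t}{2}\|\triangledown f(x)\|_2^2$ (valid from Assumption~\ref{ass1} with $t\le 1/L$) together with a local strong-convexity-type lower bound coming from $x^*$ being a unique nondegenerate minimum forces geometric decrease of $f(x_k) - f(x^*)$. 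Given $r < R$, I would then let $M_r$ be the first index after which the iterates have entered $B(x^*,r)$ and use the descent inequality to show they never leave.

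The key steps, in order, would be: (1) Show that the grid in Step~4–5 always contains at least one point inside the side-$m$ hypercube around $x^*$ once the ``anchor'' $a^k$ is within distance $m$ of $x^*$ in each coordinate — this is a pigeonhole/covering argument on the grid of spacing $m$. Actually, more carefully, one wants: the selected $z_k$ satisfies $f(z_k) \le \min_{y \text{ on grid}} f(y) \le f(p_k)$ where $p_k$ is the nearest grid point to $x^*$; combined with continuity and the grid fineness $m$, this bounds $f(z_k) - f(x^*)$. (2) Apply the descent lemma to the update $x_k = z_k - t\triangledown f(z_k)$ to get $f(x_k) - f(x^*) \le (1 - ct)\,(f(z_k) - f(x^*))$ for some $c>0$ on the basin, using a Polyak–Łojasiewicz / local strong convexity inequality $\|\triangledown f(x)\|_2^2 \ge 2c\,(f(x)-f(x^*))$ valid near the unique minimizer. (3) Translate the $f$-value bound into a distance bound via the quadratic growth $f(x) - f(x^*) \ge \tfrac{\mu}{2}\|x-x^*\|_2^2$ near $x^*$, obtaining $\|x_k - x^*\|_2 \le \sqrt{2(f(x_k)-f(x^*))/\mu}$, and choose $M_r$ so the right-hand side is below $r$ for all $k \ge M_r$. (4) Check the invariance: once inside $B(x^*,r)$ with $r<R$, the next anchor $a^k = a^{k-1} - t\triangledown f(z_k)$ stays close enough that the grid still meets the hypercube, so the argument reproduces and the iterate stays in $B(x^*,r)$.

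The main obstacle I expect is step~(1)–(2): making rigorous the claim that the interleaved grid search genuinely keeps the process anchored near $x^*$. Unlike pure gradient descent, here the anchor $a^k$ is moved by $-t\triangledown f(z_k)$ while $z_k$ itself is chosen by a global-looking $\arg\min$ over a grid that spans the whole box $[a,b]^d$ — so a priori $z_k$ could jump far away if some distant grid point happens to have a small function value. The lemma is only true because $x^*$ is the \emph{unique} global minimum, so for $r$ small the grid point nearest $x^*$ eventually beats every other grid point's $f$-value; quantifying ``eventually'' requires a uniform gap $\delta = \inf\{f(y) - f(x^*) : y \in C,\ \|y - x^*\|_2 \ge m\sqrt{d}\} > 0$, which exists by compactness of $C$ and uniqueness of $x^*$, and then showing the descent drives $f(x_k) - f(x^*)$ below $\delta$. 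Handling this gap argument carefully — and making sure the constants $R$, $M_r$ depend only on $r$, $L$, $t$, $m$, $\mu$, $\delta$ and not on the run — is where the real work lies; the remaining inequalities are the routine smooth-optimization estimates.
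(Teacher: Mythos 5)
Your plan and the paper's proof share one ingredient — a positive gap $\delta$ guaranteed by compactness and uniqueness of $x^*$ — but otherwise take genuinely different routes, and yours imports a hypothesis the lemma does not grant. The paper's proof uses \emph{only} the smoothness upper bound: from $\triangledown^2 f \preceq LI$ it gets $f(x) \leq f(x^*) + \tfrac{L}{2}\|x-x^*\|_2^2$ on a ball around $x^*$, defines $\delta$ as the minimal excess $f(\tilde x)-f(x^*)$ over non-global local minima, chooses $R$ so that every point of $B(x^*,R)$ has value below $f(x^*)+\delta/2 < f(\tilde x)$, and then argues qualitatively that since each gradient step decreases $f$ (the descent inequality \eqref{fybound}), the iterates can never escape to the basin of another local minimum; the actual convergence into arbitrarily small balls is delegated to the basin-of-attraction hypothesis on $m$. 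Your steps (2)–(3), by contrast, rest on a Polyak--\L{}ojasiewicz inequality $\|\triangledown f(x)\|_2^2 \geq 2c\,(f(x)-f(x^*))$ and a quadratic-growth bound $f(x)-f(x^*) \geq \tfrac{\mu}{2}\|x-x^*\|_2^2$ near $x^*$. Neither follows from the lemma's hypotheses: uniqueness of the global minimizer does not imply nondegeneracy, and for a minimizer where $f$ behaves like $\|x-x^*\|_2^4$ both inequalities fail (there is no $\mu>0$, and $\|\triangledown f\|^2 \sim \|x-x^*\|^6$ decays faster than $f-f^*\sim\|x-x^*\|^4$). So as written, step (2)'s geometric contraction and step (3)'s conversion from function values to distances would fail for the lemma as stated; you would be proving a version of the lemma with an added local strong convexity assumption, closer to what the paper assumes only later in Theorem~\ref{linconvthm}.

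That said, your diagnosis of the real difficulty is sharper than the paper's treatment of it: you correctly observe that $z_k$ is an $\arg\min$ over a grid spanning all of $[a,b]^d$, so a priori it could jump to a distant grid point, and that the fix is a uniform gap $\inf\{f(y)-f(x^*): \|y-x^*\|_2 \geq m\sqrt{d}\} > 0$ obtained from compactness and uniqueness. The paper's observation (1)+(2) is essentially this same gap argument restricted to local minima, but it silently skips both the question of why the iterates ever \emph{enter} $B(x^*,r)$ and why ``$f(x_k)$ below the gap'' implies ``$x_k$ close to $x^*$'' — the latter being exactly the quadratic-growth issue you isolate. If you replace your PL/strong-convexity machinery with the paper's weaker qualitative appeal to the basin of attraction (gradient descent started in the side-$m$ hypercube converges to $x^*$ by hypothesis), your gap argument for step (1) and your invariance check in step (4) would slot in and arguably yield a more complete proof than the one in the paper.
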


\begin{proof}
From assumption \ref{ass1} we have that $\triangledown^2 f(x) - LI$ is negative semi-definite matrix. Using a quadratic expansion of $f$ around $f(x^*)$, we obtain the following inequality for $x \in B(x^*,r)$
\begin{align}
 \nonumber f(x) &\leq f(x^*) + \triangledown f(x^*)^T (x-x^*) + \frac{1}{2} \triangledown^2 f(x^*) \parallel x-x^* \parallel_2^2 \\
 f(x) &\leq f(x^*) + \frac{1}{2} L \parallel x-x^* \parallel_2^2 \label{basbound}
\end{align}
Since $x^*$ is a global minima we have $f(x^*) \leq f(x)$ for all $x \in C$. Let $\tilde x$ be any local minima which is not global minima. Hence $f(\tilde x) =  f(x^*) + \delta_{\tilde x} $. Now let $\delta = \min_{\tilde x} \delta_{\tilde x}$. Since $\tilde x$ is local minima but not global minima we have $\delta > 0$. Take $R > 0$ such that for any $x \in B(x^*,R)$, 
\[ \frac{L}{2}\parallel x - x^* \parallel_2^2 \leq \frac{\delta}{2} \] or that
$R \leq \frac{\delta}{L}.$
Now we have from equation \eqref{basbound}
\[ f(x) \leq f(x^*) + \frac{\delta}{2}, \]
for $x \in B(x^*,R)$. That is we have shown there exists a $R > 0$ such that for any $x \in B(x^*,R)$, 
\begin{equation}
 f(x) \leq f(\tilde x). \label{ballremain}
\end{equation}
Now we observe the following:
\begin{enumerate}
 \item from equation \eqref{ballremain} we can see that no other local minima can have a value $f(\tilde x)$, lower than the function value in this ball $B(x^*,R)$
 \item for sufficiently small step-size $t \leq 1/L$, the function value decreases with each gradient step (see equation \eqref{fybound} in proof of Theorem \ref{linconvthm})
\end{enumerate}
That is if $x \in B(x^*,R)$, the iterates in the algorithm can not move to another hypercube around some local minima $\tilde x$. Or that for all $r < R$ there exists $M_{r} > 0$ such that for $k \geq M_{r}$ the iterates remain in the ball $B(x^*,r)$ around $x^*$. 
\end{proof}

\begin{theorem}
Let $x^*$ be the unique global minimizer of the function $f$. We have for the iterates $\{x_k \}$ generated by the algorithm \[ \lim_{k \rightarrow \infty} f(x_k) = f(x^*). \]
\end{theorem}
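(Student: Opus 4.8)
The plan is to split the argument into an \emph{outer} phase, in which the iterates are shown to enter and then never leave an arbitrarily small ball $B(x^*,r)$ around the global minimizer, and an \emph{inner} phase, in which the iterates essentially perform gradient descent and the function values decrease monotonically. The outer phase is precisely the content of Lemma~\ref{boundlem}, so I may invoke it directly; the inner phase rests on the classical one-step descent inequality for $L$-smooth functions. It is worth noting up front that Lemma~\ref{boundlem} itself cites this descent inequality, so the first task is to establish that inequality in a way that is completely self-contained (it only uses Assumption~\ref{ass1} and $t\le 1/L$, not Lemma~\ref{boundlem}), thereby avoiding any real circularity.

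First I would prove the descent inequality. Using the quadratic upper bound that follows from $\triangledown^2 f \preceq L I$ in Assumption~\ref{ass1}, for any point $w\in C$ and its gradient step $w^+ := w - t\,\triangledown f(w)$ one has
\[
 f(w^+) \;\le\; f(w) + \triangledown f(w)^T(w^+-w) + \tfrac{L}{2}\lVert w^+-w\rVert_2^2 \;=\; f(w) - t\Bigl(1-\tfrac{Lt}{2}\Bigr)\lVert \triangledown f(w)\rVert_2^2 ,
\]
so that for $t\le 1/L$ we get $f(w^+)\le f(w) - \tfrac{t}{2}\lVert\triangledown f(w)\rVert_2^2$. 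Next I would argue that each iteration of the algorithm consists of a grid-search step followed by a gradient step: the grid search replaces the current point by the grid point $z_k$ of least function value, whose value does not exceed that of the grid point produced by the previous gradient step, and the gradient step then lowers the value further by the displayed inequality. Chaining these over $k$, the sequence $\{f(x_k)\}$ is non-increasing, and since $x^*$ is the global minimizer it is bounded below by $f(x^*)$; hence $f(x_k)$ converges to some limit $\ell\ge f(x^*)$.

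To identify $\ell$ with $f(x^*)$ I would invoke Lemma~\ref{boundlem}: for every $r<R$ there is $M_r$ with $x_k\in B(x^*,r)$ for all $k\ge M_r$, which is exactly the statement that $x_k\to x^*$. Continuity of $f$ then gives $\ell=\lim_k f(x_k)=f(x^*)$, finishing the proof. If one wants the geometric rate hinted at by the name of the theorem, the alternative is to telescope the descent inequality to get $\sum_k\lVert\triangledown f(x_k)\rVert_2^2<\infty$, hence $\triangledown f(x_k)\to 0$, and to combine this with a local Polyak--\L{}ojasiewicz bound $\lVert\triangledown f(x)\rVert_2^2\ge 2\mu\,(f(x)-f(x^*))$ valid on $B(x^*,r)$ for small $r$ (which holds if $\triangledown^2 f(x^*)\succeq \mu I$, reasonable for a strict minimizer of a $C^2$ function); once the iterates are trapped in that ball by Lemma~\ref{boundlem}, this yields $f(x_k)-f(x^*)\le(1-t\mu)\,(f(x_{k-1})-f(x^*))$ and in particular the claimed limit.

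The step I expect to be the main obstacle is the monotonicity claim, i.e.\ verifying rigorously that the composite ``grid search $+$ gradient step'' never increases the running best function value. This requires being careful about exactly which candidate points $z_k$ is the minimum over, and about ensuring that the previous iterate (or a grid neighbour of it) is always among those candidates, given the re-centring of the grid in lines~4--6 of the algorithm. A secondary point to be careful about is the interaction with Lemma~\ref{boundlem}: one must make sure that the descent inequality it uses is exactly the self-contained one established here, and that the iterates remain in $C$ (or are projected back) so that $f(x^*)$ is a genuine lower bound for $\{f(x_k)\}$.
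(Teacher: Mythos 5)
Your proposal is correct and follows the same overall decomposition as the paper: invoke Lemma~\ref{boundlem} to trap the iterates in arbitrarily small balls around $x^*$, and use the one-step descent inequality under Assumption~\ref{ass1} with $t\le 1/L$ to get that $\{f(x_k)\}$ is monotone and bounded, hence convergent. The one place where you genuinely diverge is the identification of the limit: the paper takes $r<R$ small enough that $B(x^*,r)$ lies in the basin of attraction and then appeals to the defining property of the basin (``gradient descent converges to the minimum there''), whereas you observe that the conclusion of Lemma~\ref{boundlem} --- for \emph{every} $r<R$ the iterates eventually stay in $B(x^*,r)$ --- is literally the statement $x_k\to x^*$, so continuity of $f$ alone gives $f(x_k)\to f(x^*)$. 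Your route is the cleaner one: it uses strictly fewer hypotheses (no appeal to the basin-of-attraction input $m$ at this stage) and makes explicit that the lemma already contains pointwise convergence of the iterates. You are also more careful than the paper on two points it glosses over: you note that the descent inequality must be established independently of Lemma~\ref{boundlem} (since that lemma forward-references it) to avoid circularity, and you flag that monotonicity of $f(x_k)$ requires checking that the re-centred grid at step $k$ actually contains $x_{k-1}$ among the candidates for $z_k$ --- which is the argument needed to justify the paper's bare assertion that ``the function value decreases with each iteration.'' The optional Polyak--\L{}ojasiewicz rate argument at the end of your proposal is extra material not needed for this statement (the rate is the subject of Theorem~\ref{linconvthm}), but it does not affect correctness.
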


\begin{proof}
Now from Lemma \ref{boundlem} we have $R > 0$ such that for all $r < R$ there exists $M_{r} > 0$ with $x_k \in B(x^*,r)$ for $k \geq M_{r}$. From the algorithm we also know that the function value decreases with each iteration. Thus we see that the sequence $\{f(x_k)\}$ converges as it is monotonic and bounded. Take a sufficiently small $r < R$, such that $B(x^*,r)$ lies in the basin of attraction. Hence we also have that $\lim_{k \rightarrow \infty} f(x_k) = f(x^*)$ as in the basin of attraction around the global minima the gradient descent converges to the minima.
\end{proof}

\begin{theorem}\label{linconvthm}
 Let $x^*$ be the unique global minimizer of the function $f$. For simplicity denote $M = M_{r}$. Let step-size $t \leq 1/L$ where  $L$ is Lipschitz constant of the gradient function in Assumption \ref{ass1}. If we also assume that the function is convex in the ball $B(x^*,r)$ we can show that at iteration $k > M$, $f(x_k)$ satisfies
 \[ f(x_k) - f(x^*) \leq \frac{\parallel x_{M} - x^* \parallel_2^2}{2 t (M-k)} .\]
 That is the gradient descent algorithm converges with rate $O(1/k)$.
\end{theorem}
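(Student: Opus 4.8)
The plan is to follow the classical $O(1/k)$ analysis of gradient descent for a convex objective with Lipschitz gradient, adapted so that the grid-search substep is taken into account. The first ingredient is the basic descent inequality (the estimate labelled below, which is the one invoked in the proof of Lemma \ref{boundlem}): for any point $z$, its gradient step $z^{+} := z - t\triangledown f(z)$ with $t \le 1/L$ satisfies, by a second-order Taylor expansion together with Assumption \ref{ass1},
\begin{equation}
f(z^{+}) \le f(z) + \triangledown f(z)^{T}(z^{+}-z) + \frac{L}{2}\parallel z^{+}-z\parallel_2^2 = f(z) - t\Big(1-\tfrac{Lt}{2}\Big)\parallel\triangledown f(z)\parallel_2^2 \le f(z) - \frac{t}{2}\parallel\triangledown f(z)\parallel_2^2. \label{fybound}
\end{equation}
In particular $f$ is non-increasing along the iterates, which is exactly the monotonicity used earlier.

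Next I would fix $k > M$, so that by Lemma \ref{boundlem} the points $z_k$ and $x_k$ lie in $B(x^{*},r)$, where $f$ is assumed convex. The previous iterate $x_{k-1}$ is one of the grid points scanned at step $k$, and $z_k$ is chosen to minimise $f$ over that grid, hence $f(z_k) \le f(x_{k-1})$; thus the grid search can only help, and it suffices to control the gradient step taken from $z_k$. Combining convexity, $f(z_k) \le f(x^{*}) + \triangledown f(z_k)^{T}(z_k-x^{*})$, with \eqref{fybound} applied at $z = z_k$ (so that $z^{+} = x_k$), and completing the square, one gets the per-step bound
\[
f(x_k) - f(x^{*}) \le \triangledown f(z_k)^{T}(z_k-x^{*}) - \frac{t}{2}\parallel\triangledown f(z_k)\parallel_2^2 = \frac{1}{2t}\Big(\parallel z_k-x^{*}\parallel_2^2 - \parallel x_k-x^{*}\parallel_2^2\Big).
\]

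Then I would telescope. Writing this bound for indices $M+1,\dots,k$ and using that, inside the convex ball, the comparison point satisfies $\parallel z_j-x^{*}\parallel_2 \le \parallel x_{j-1}-x^{*}\parallel_2$, the right-hand sides collapse to $\frac{1}{2t}\big(\parallel x_M-x^{*}\parallel_2^2 - \parallel x_k-x^{*}\parallel_2^2\big) \le \frac{1}{2t}\parallel x_M-x^{*}\parallel_2^2$. Since $\{f(x_j)\}$ is non-increasing, $f(x_k)-f(x^{*}) \le \frac{1}{k-M}\sum_{j=M+1}^{k}\big(f(x_j)-f(x^{*})\big) \le \frac{\parallel x_M-x^{*}\parallel_2^2}{2t(k-M)}$, which is the asserted rate (the denominator in the statement should read $2t(k-M)$ rather than $2t(M-k)$).

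The main obstacle I anticipate is making the grid-search substep rigorous: one must argue that once the iterates are trapped in $B(x^{*},r)$, the point $z_k$ may be treated like the previous iterate for the purposes of telescoping — in particular that replacing $x_{k-1}$ by a grid point $z_k$ with $f(z_k)\le f(x_{k-1})$ does not increase the distance to $x^{*}$. The cleanest route is probably to show that within the convex ball the map $z\mapsto z - t\triangledown f(z)$ is non-expansive toward $x^{*}$ when $t\le 1/L$, and then to arrange $r$ (and the grid spacing $m$) small enough that the relevant sublevel set near $x^{*}$ is convex and ``nested'' toward $x^{*}$, so that $\parallel z_k-x^{*}\parallel_2\le\parallel x_{k-1}-x^{*}\parallel_2$; alternatively, one simply assumes the grid is fine enough that eventually $z_k=x_{k-1}$, reducing the argument to the textbook gradient-descent telescoping above.
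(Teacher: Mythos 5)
Your proposal follows essentially the same route as the paper: the descent inequality \eqref{fybound} from the $L$-Lipschitz gradient and $t\le 1/L$, the convexity bound $f(x)\le f(x^*)+\triangledown f(x)^T(x-x^*)$ inside $B(x^*,r)$, completing the square to get the per-step estimate $\frac{1}{2t}\big(\parallel x-x^*\parallel_2^2-\parallel y-x^*\parallel_2^2\big)$, telescoping from $M$ to $k$, and monotonicity of $\{f(x_j)\}$ to pass from the averaged bound to $f(x_k)-f(x^*)$. The one place you diverge is the treatment of the grid-search substep: the paper simply asserts that Lemma \ref{boundlem} gives $z_k=x_k$ for $k\ge M$ and then runs the textbook telescoping, whereas you correctly observe that Lemma \ref{boundlem} only traps the iterates in the ball and that one still needs $\parallel z_j-x^*\parallel_2\le\parallel x_{j-1}-x^*\parallel_2$ (or eventual coincidence $z_j=x_{j-1}$) for the right-hand sides to collapse; your proposed fixes (non-expansiveness of $z\mapsto z-t\triangledown f(z)$ toward $x^*$ for $t\le 1/L$, or nestedness of sublevel sets in the convex ball) are reasonable ways to close a gap the paper glosses over. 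You are also right that the denominator in the statement should read $2t(k-M)$ (and the paper's final averaging step, which divides by $k$ rather than $k-M+1$, has the same sign/count slip); these are typos in the source rather than defects in your argument.
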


\begin{proof}
Consider the gradient descent step $x_{k+1} = z_k - t \triangledown f(z_k)$ in the algorithm. Since the iterates remain in a ball around a global minima asymptotically, we have from Lemma \ref{boundlem} for $k \geq M_{r}, \,$ $z_k = x_k$. Now let $y = x - t \triangledown f(x)$, we then get:
\begin{align*}
f(y) &\leq f(x) + \triangledown f(x)^T (y-x) + \frac{1}{2} \triangledown^2 f(x) \parallel y-x \parallel_2^2 \\
&\leq f(x) + \triangledown f(x)^T (y-x) + \frac{1}{2} L \parallel y-x \parallel_2^2 \\
&= f(x) + \triangledown f(x)^T (x - t \triangledown f(x) - x) + \frac{1}{2} L \parallel y -x \parallel_2^2 \\
&= f(x) -  t \parallel \triangledown f(x) \parallel_2^2 + \frac{1}{2} L \parallel  y - x\parallel_2^2 \\
&= f(x) - \big(1-\frac{1}{2} Lt\big)t \parallel \triangledown f(x) \parallel_2^2.
\end{align*}
Using the fact that $t \leq 1/L$, $-\big(1-\frac{1}{2} Lt\big) \leq -\frac{1}{2}$, hence we have
\begin{equation} \label{fybound}
 f(y) \leq f(x) - \frac{1}{2}t\parallel \triangledown f(x) \parallel_2^2.
\end{equation}
Next we bound $f(y)$ the objective function value at the next iteration in terms of $f(x^*)$. 
Note that by assumption $f$ is convex in the ball $B(x^*,r)$. 
Thus we have for $x\in B(x^*,r)$,
\begin{align*}
 f(x) \leq f(x^*) + \triangledown f(x)^T (x-x^*)
\end{align*}
Plugging this into equation \eqref{fybound} we get,
\begin{align*}
f(y) &\leq f(x^*) + \triangledown f(x)^T (x - x^*) - \frac{t}{2} \parallel \triangledown f(x) \parallel_2^2 \\
f(y) - f(x^*) &\leq \frac{1}{2t}\bigg( 2t\triangledown f(x)^T (x - x^*) - t^2 \parallel \triangledown f(x) \parallel_2^2 \bigg) \\
f(y) - f(x^*) &\leq \frac{1}{2t}\bigg( 2t\triangledown f(x)^T (x - x^*) - t^2 \parallel \triangledown f(x) \parallel_2^2 \\
&\quad \quad \quad \quad \quad \quad - \parallel x - x^* \parallel_2^2 + \parallel x - x^* \parallel_2^2 \bigg) \\
f(y) - f(x^*) &\leq \frac{1}{2t}\bigg( \parallel x - x^* \parallel_2^2 - \parallel x - t\triangledown f(x) -  x^* \parallel_2^2 \bigg)
\end{align*}
By definition we have $y = x - t \triangledown f(x)$, plugging this into the previous equation we have
\begin{equation}
f(y) - f(x^*) \leq \frac{1}{2t}\bigg( \parallel x - x^* \parallel_2^2 - \parallel y -  x^* \parallel_2^2 \bigg) 
\end{equation}
This holds for all gradient descent iterations $i \geq M$. Summing over all such iterations we get:
\begin{align*}
 \sum_{i=M}^k \big(f(x_i) - f(x^*)\big) &\leq \sum_{i=M}^k \frac{1}{2t} \bigg(\parallel x_{i-1} - x^* \parallel_2^2 - \parallel x_i - x^* \parallel_2^2 \bigg) \\
 &= \frac{1}{2t} \bigg(\parallel x_{M} - x^* \parallel_2^2 - \parallel x_k - x^* \parallel_2^2 \bigg) \\
 &\leq \frac{1}{2t} \bigg(\parallel x_{M} - x^* \parallel_2^2 \bigg).
\end{align*}
Finally using the fact that $f$ is decreasing in every iteration, we conclude that
\[ f(x_k) - f(x^*) \leq \frac{1}{k} \sum_{i=M}^k \big(f(x_i) - f(x^*)\big) \leq \frac{1}{2t(M-k)} \parallel x_{M} - x^* \parallel_2^2 . \]
\end{proof}

\begin{remark}
 If the global minima $x^*$ is not unique, then the algorithm can oscillate around different minima. If we assume that the function is convex in a small interval around all these global minima, then we can show that the algorithm converges to one of the minimum points $x^*$. In addition like in the previous theorem we can also show that the convergence is linear.
\end{remark}

\begin{remark}
We have not considered momentum based acceleration methods which fasten the rate of convergence in this paper.
\end{remark}

\section{Experimental Results}
We present some numerical results. We tested the algorithm on standard benchmark functions shown in Tables \ref{benchf} and \ref{benchfs}. We show the plots of the function value as iteration proceeds for each of these functions. For Rastrigin, sphere and Rosenbrock functions the dimension was set to 20. We see from these plots that the algorithm converges to the optimum for each of these functions as expected. Table \ref{values} gives the step-sizes and lower bound on the basin of attraction set used for each of these functions.

\begin{table}[ht]
\centering
\caption{Various Benchmark Functions for Global Optimization}
\label{benchf}
  \begin{tabular}{|c|c|}
    \hline
    Name & Formula \\ 
    \hline
    Rastrigin Function & $f(x) = An + \sum_{i=1}^n \big(x_i^2 - A \cos(2 \pi x_i) \big)$ where $A = 10$ \\
    \hline
    Ackley Function & $\begin{aligned} &f(x) = -20 \exp \big( -0.2\sqrt{0.5(x^2 + y^2)} \big) \\ &- \exp(0.5(\cos(2\pi x) + \cos(2\pi y))) + e + 20\end{aligned}$ \\
    \hline
    Sphere Function & $f(x) = \sum_{i=1}^n x_i^2$ \\
    \hline
    Rosenbrock Function & $f(x) = \sum_{i=1}^{n-1} \big(100(x_{i+1} - x_i^2)^2 + (1-x_i)^2 \big)$ \\
    \hline
    Beale Function & $\begin{aligned} f(x) &= (1.5 - x + xy)^2 + (2.25 -x + xy^2)^2 \\ &+ (2.625 - x + xy^3 )^2 \end{aligned} $ \\
    \hline
    Booth Function & $f(x,y) = (x+2y-7)^2 + (2x+y-5)^2$ \\
    \hline
    \end{tabular}
\end{table}

\begin{table}[ht]
\centering
\caption{Global Minimum and Search Domain for these Benchmark Functions}
\label{benchfs}
  \begin{tabular}{|c|c|c|}
    \hline
    Name & Global Minimum & Search Domain \\ 
    \hline
    Rastrigin Function & $f(0,\ldots,0) = 0$ & $-5.12 \leq x_i \leq 5.12$  \\
    Ackley Function & $f(0,0) = 0$ & $-5 \leq x,y \leq 5$ \\ 
    Sphere Function & $f(0,\ldots,0) = 0$ & $-\infty \leq x_i \leq \infty$ \\
    Rosenbrock Function & $f(1,\ldots,1) = 0$ & $-\infty \leq x_i \leq \infty$ \\
    Beale Function & $f(3,0.5) = 0$ & $-4.5 \leq x,y \leq 4.5$ \\
    Booth Function & $f(1,3)=0$ & $-10 \leq x,y \leq 10$ \\
    \hline
    \end{tabular}
\end{table}

\begin{table}[ht]
\centering
\caption{Table showing parameters set in the algorithm for these functions}
\label{values}
 \begin{tabular}{|c|c|p{25mm}|}
  \hline
  Function & Step-size & Lower bound \newline on the basin \\
  \hline
    Rastrigin Function & 0.0001 & 0.5  \\
    Ackley Function & 0.0001 & 0.1 \\ 
    Sphere Function & 0.001 & 0.3 \\
    Rosenbrock Function & 0.001 & 0.5 \\
    Beale Function & 0.0005 & 0.3 \\
    Booth Function & 0.005 & 0.3 \\
    \hline
  \end{tabular}
\end{table}

\begin{figure}[!ht]
\centering
\begin{subfigure}{.4\textwidth}
\includegraphics[width=.6\textwidth,angle=270]{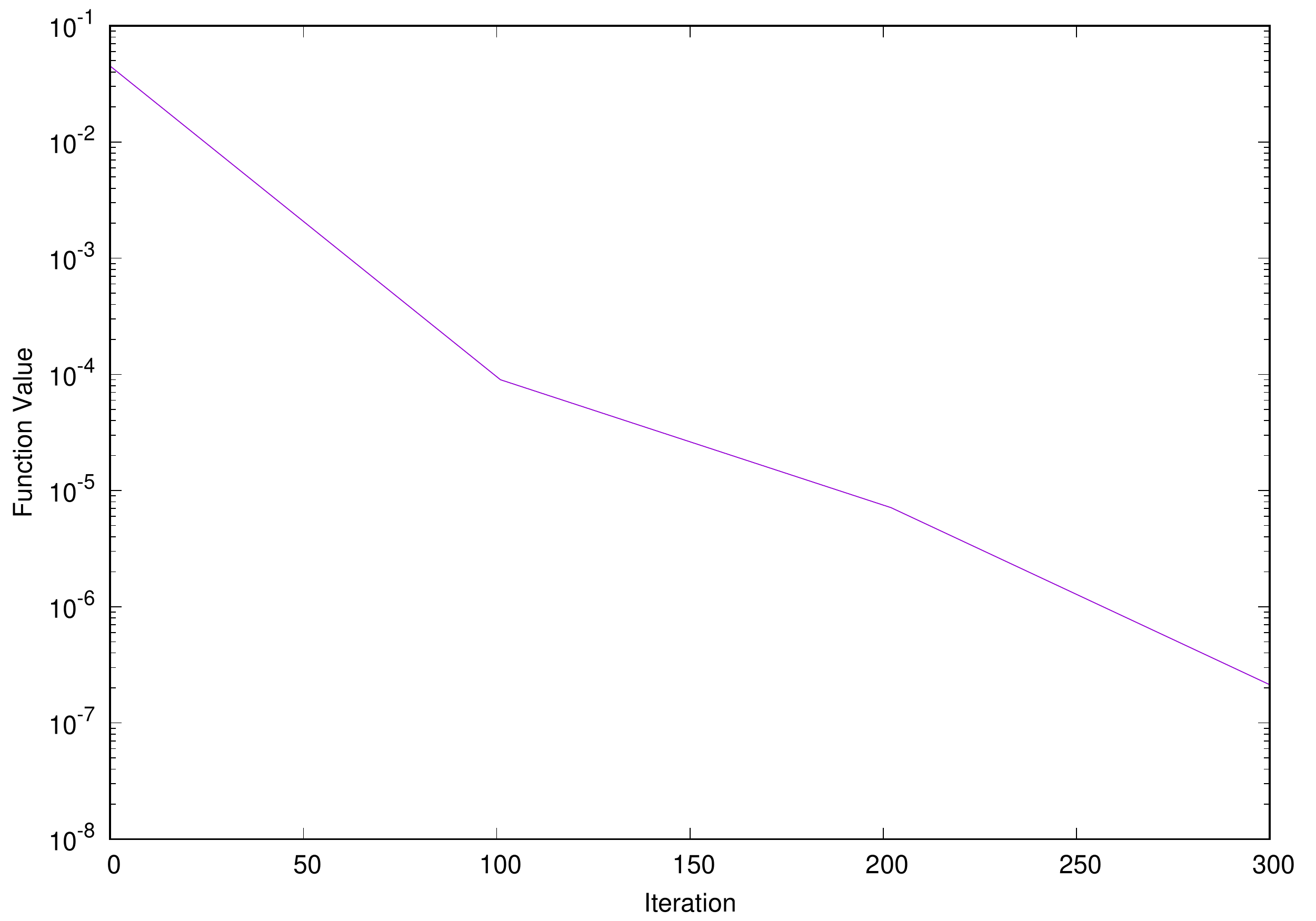}
\end{subfigure}
\begin{subfigure}{.4\textwidth}
\centering
\includegraphics[width=.7\textwidth,angle=270]{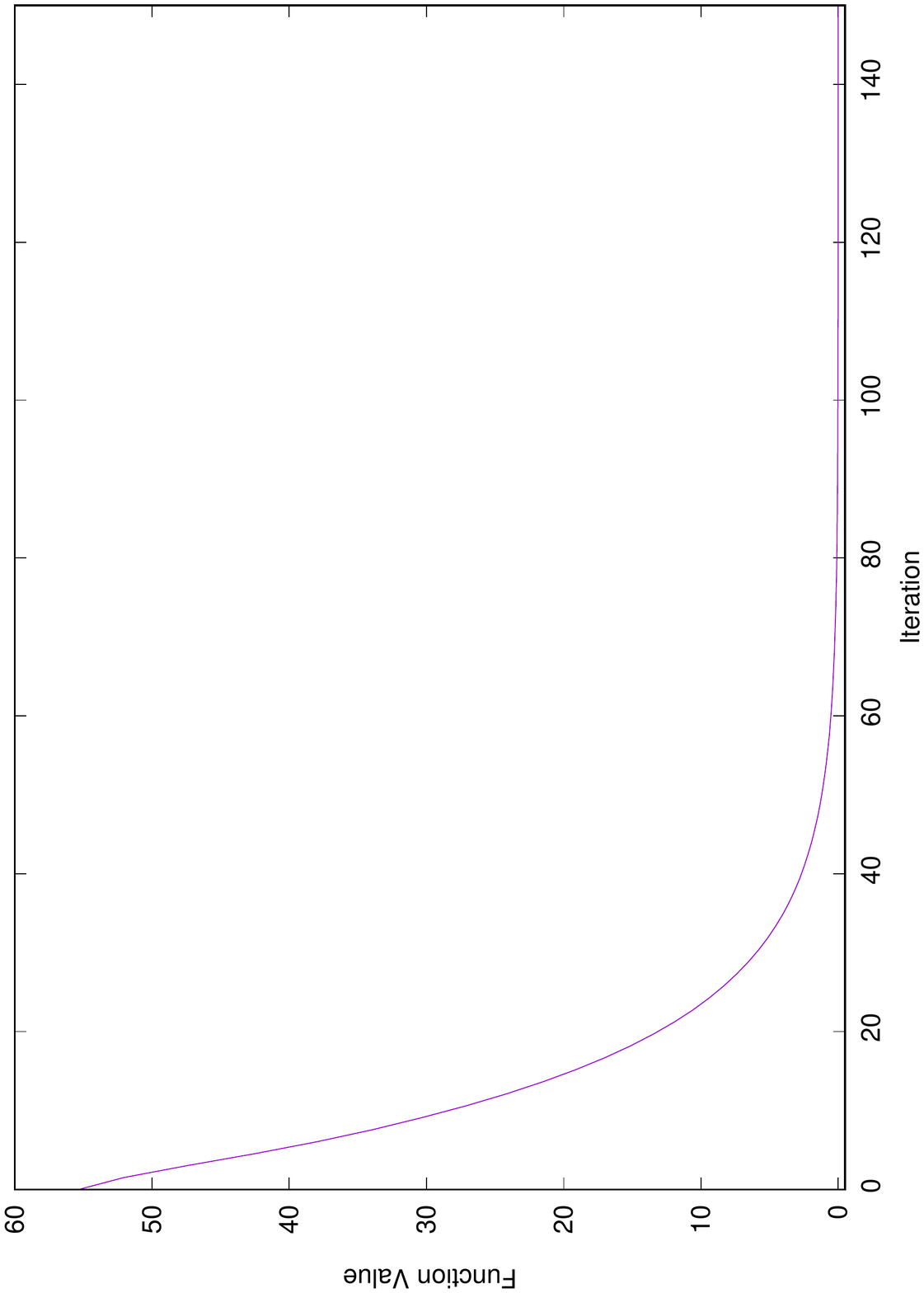}
\end{subfigure}
\caption{Convergence to Optimum for Ackley and Rastrigin Function}
\end{figure}

\begin{figure}[!ht]
\centering
\begin{subfigure}{.4\textwidth}
\includegraphics[width=.7\textwidth,angle=270]{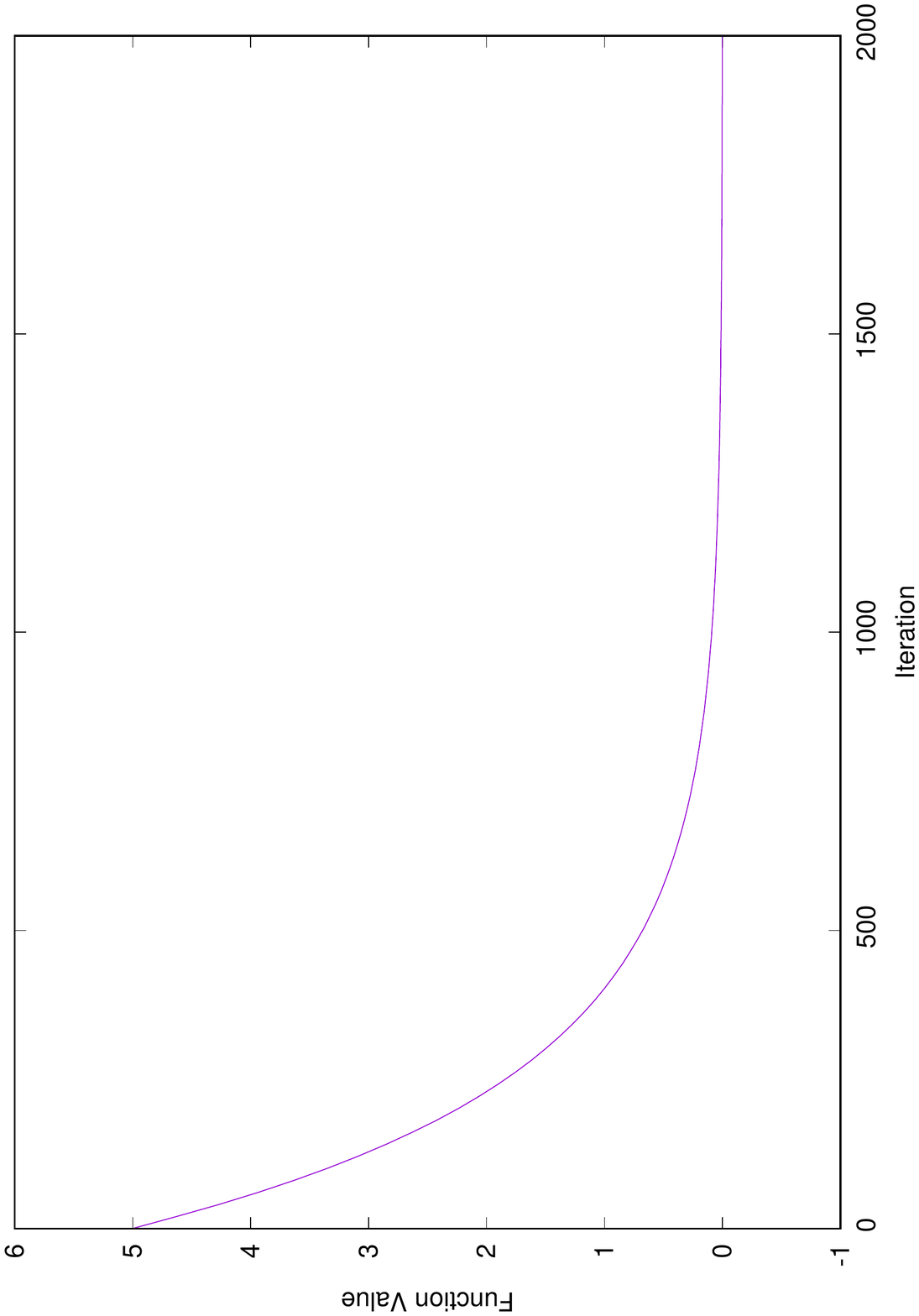}
\end{subfigure}
\begin{subfigure}{.4\textwidth}
\centering
\includegraphics[width=.6\textwidth,angle=270]{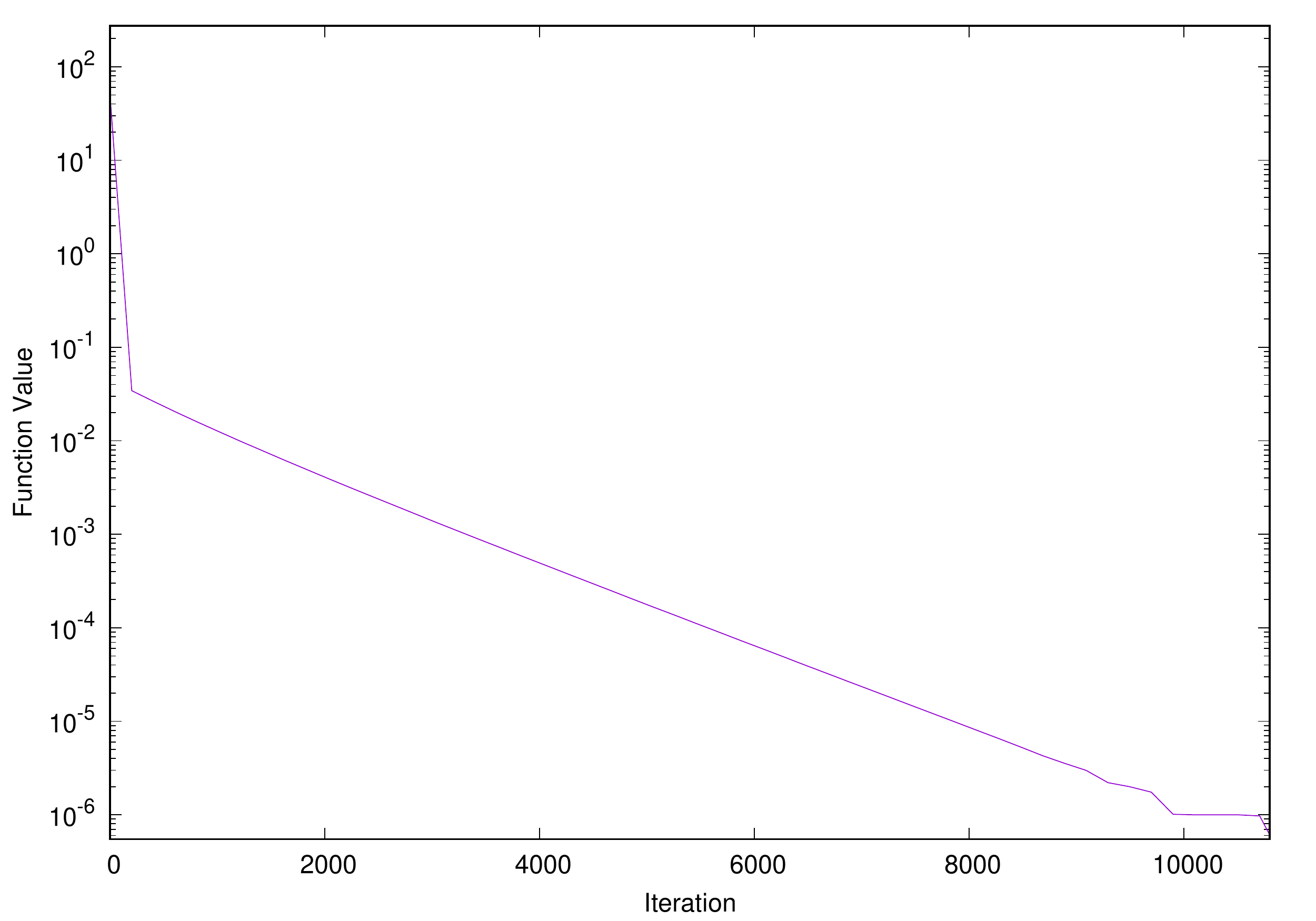}
\end{subfigure}
\caption{Convergence to Optimum for Sphere and Rosenbrock Function}
\end{figure}

\begin{figure}[!ht]
\centering
\begin{subfigure}{.4\textwidth}
\includegraphics[width=.6\textwidth,angle=270]{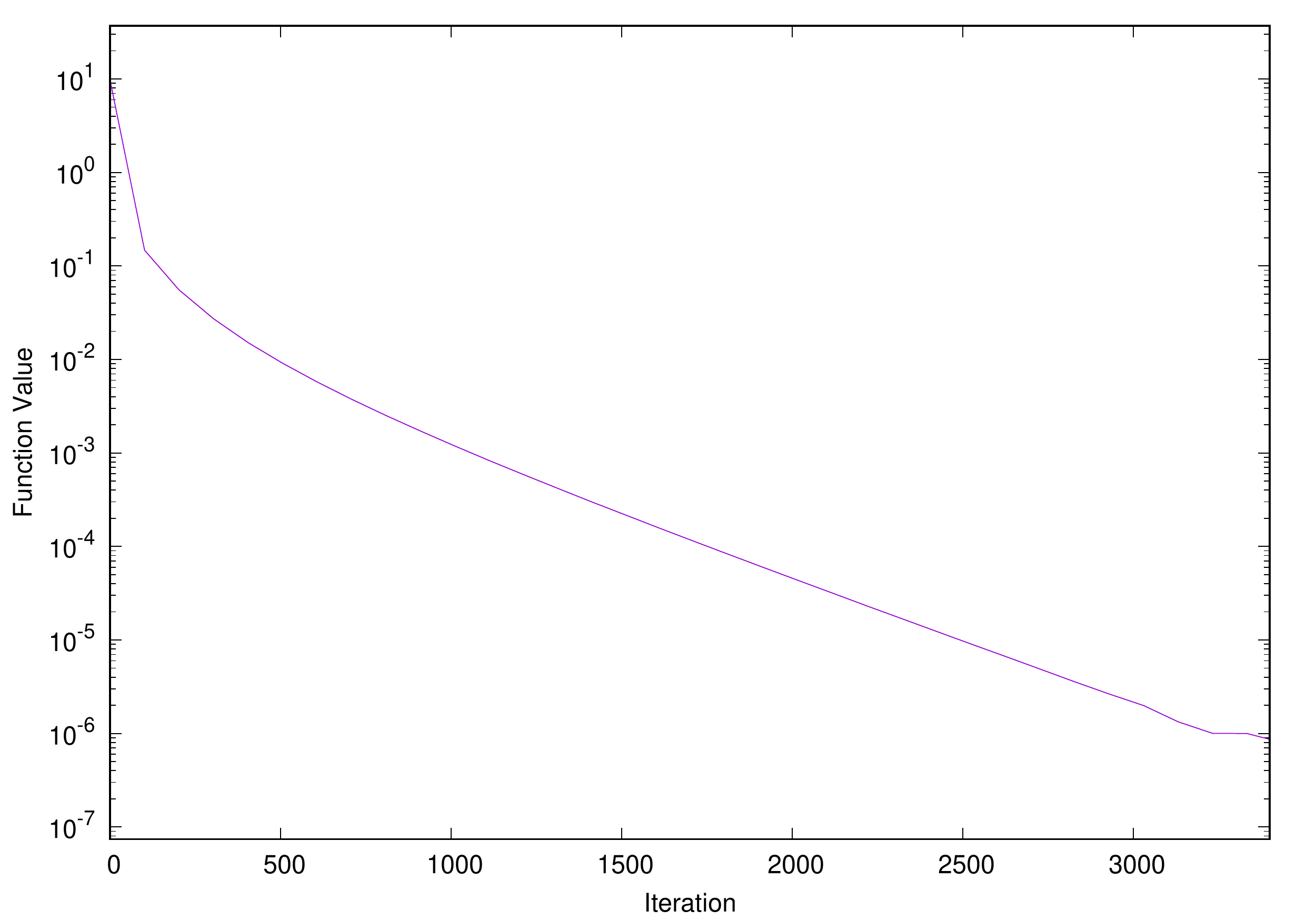}
\end{subfigure}
\begin{subfigure}{.4\textwidth}
\centering
\includegraphics[width=.6\textwidth,angle=270]{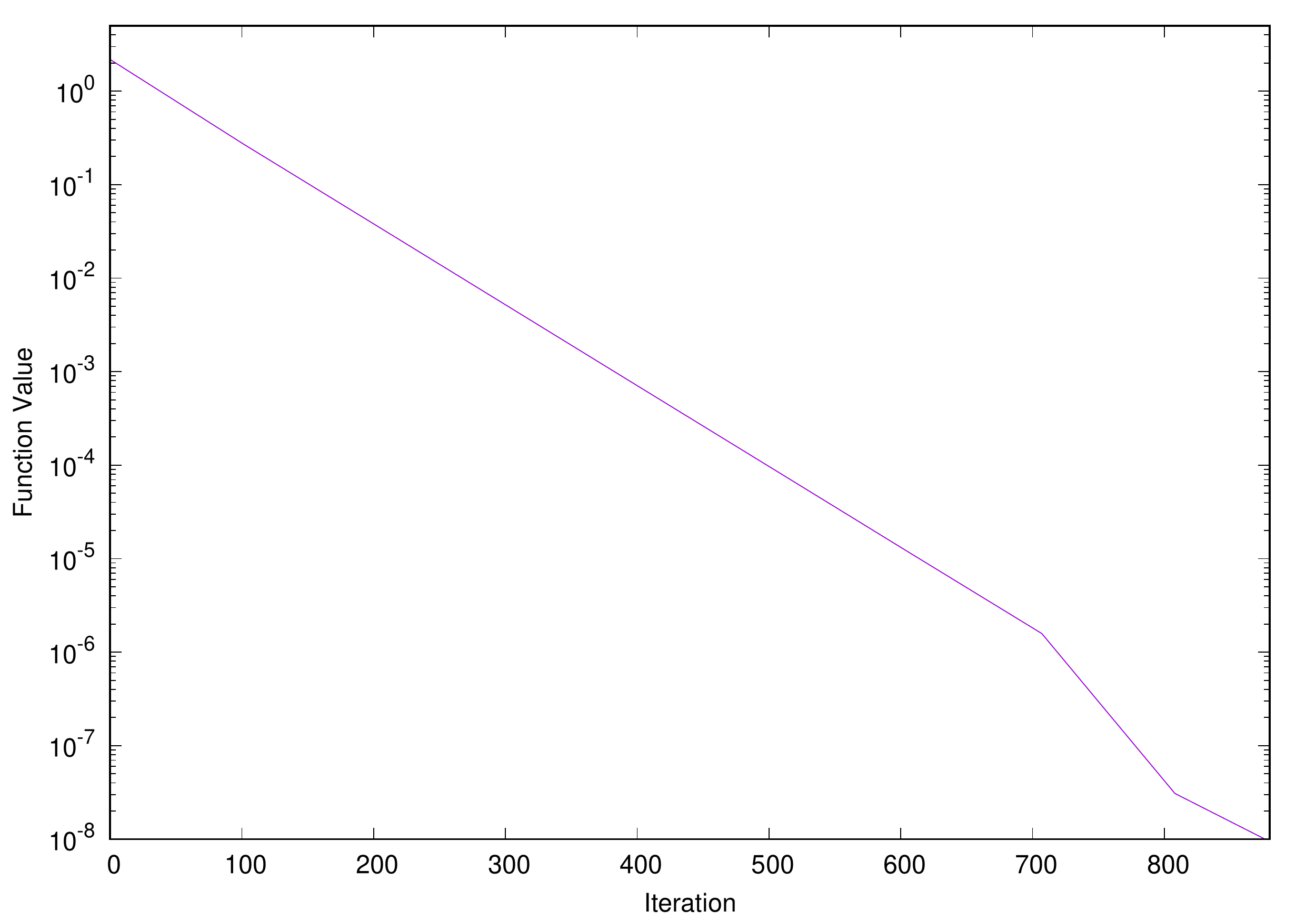}
\end{subfigure}
\caption{Convergence to Optimum for Beale and Booth Function}
\end{figure}

\section{Conclusion}
We have given a simple proof that finding global minima of a continuous function is not limit computable. To the best of our knowledge, this is the first such result. We have also given an algorithm that converges to the global minima when a lower bound to the basin of attraction of a global minimum is known. Finally, some numerical results were presented.

\bibliographystyle{plain}
\bibliography{limglobal}

\end{document}